\newtheorem{thm}{Theorem}
\newtheorem{lem}[thm]{Lemma}
\newtheorem{cor}[thm]{Corollary}
\newdefinition{rmk}[thm]{Remark}
\newdefinition{defn}[thm]{Definition}
\newdefinition{exmp}[thm]{Example}
\apptocmd\normalsize{%
 \abovedisplayskip=4pt
 \abovedisplayshortskip=3pt
 \belowdisplayskip=4pt
 \belowdisplayshortskip=6pt
}{}{}
\setlist{topsep=5pt,itemsep=2pt} 
\begin{document}

\begin{frontmatter}
\title{Heterogeneous carrying capacities and global extinction in metapopulations}

\author[1]{Jakub Hesoun}
\ead{hesounj@kma.zcu.cz}

\author[1]{Petr Stehl\'{i}k\corref{cor1}}
\ead{pstehlik@kma.zcu.cz}

\cortext[cor1]{Corresponding author}

\address[1]{Department of Mathematics and NTIS, University of West Bohemia, Univerzitn\'{i} 8, 301~00 Pilsen, Czech Republic}

\begin{abstract}
In this paper we consider a simple two patch reaction diffusion model with strong Allee effect, sufficiently distinct carrying capacities, similar reaction strengths, and strong diffusion. In the homogeneous case, i.e., in in the case of equal or similar capacities and reaction strengths, it is well known that the number of stationary solutions ranges from three (strong diffusion) to nine (weak diffusion). We provide sufficient conditions which includes the diffusion strength and reaction parameters that ensure that the extinction point is the unique and globally asymptotically stable equilibrium in the case of heterogeneous capacities. For the sake of robustness we consider several bistable reaction functions, compare our analytical result with numerical simulations, and conclude the paper with a short discussion on global extinction literature (which has provided mostly numerical results so far), as well as other related phenomena, e.g., fragmentation, the perfect mixing paradox, and the natural form the reaction diffusion patch models.


\end{abstract}

\begin{keyword}
Extinction \sep Allee effect \sep bistability \sep metapopulations\sep perfect mixing paradox.


\MSC 37N25 \sep 39A12 \sep 92C42.


\end{keyword}

\end{frontmatter}

\section{Introduction}
\noindent In this paper we consider a simple two-patch bistable reaction-diffusion model with heterogeneous capacities $k_1\neq k_2$

\begin{equation}\label{e:main}
    \begin{cases}
    \displaystyle x_1' = D(x_2-x_1) + \lambda_1 x_1 \left(1-\frac{x_1}{k_1} \right) \left(\frac{x_1}{k_1}-\frac{1}{2} \right),\\
    \displaystyle x_2' = D(x_1-x_2) + \lambda_2 x_2 \left(1-\frac{x_2}{k_2} \right) \left(\frac{x_2}{k_2}-\frac{1}{2} \right),    
    \end{cases}
\end{equation}
where $D>0$ represents the diffusion parameter and $\lambda_1,\lambda_2>0$ heterogeneous reaction strengths. Without loss of generality we assume that $k_2\leq k_1$. 

The system \eqref{e:main} has been extensively studied in the symmetric case $k_1=k_2$ and $\lambda_1=\lambda_2$. It was shown that there exist 9 stationary solutions (6 heterogeneous and 3 homogeneous)  for sufficiently small $D$ and three homogeneous solutions (two stable ones and one unstable) once $D$ is large enough, \cite{Gruntfest1997, Gyllenberg1999, Stehlik2017}. This result has deep consequences both for the theory of metapopulations \cite{Hanski1998} but also for patterns of discrete-space dynamical systems \cite{Hupkes2019c}. 

In contrast to this homogeneous case, our main result provides sufficient conditions on the heterogeneous capacities $k_i$, and reaction strengths $\lambda_i$ which ensure that for sufficiently strong diffusion, there is a unique equilibrium -- the origin which corresponds to the extinction of both populations, see also Fig.~\ref{fig:numerics}.
\begin{thm}\label{t:main}
Assume that $\max\{\lambda_1,\lambda_2\}<4D$, $2k_2<k_1$, and
\begin{equation}\label{e:ineq:l1l2}
    \max \left\{ \frac{2 \sqrt{3} k_1^2}{9 \left( k_1-k_2 \right) \left( 2 k_1-k_2 \right)} , \, \frac{\sqrt{3} k_1^2}{18 \left(k_1-2 k_2\right) \left( k_1-k_2\right)} \right\} < \frac{\lambda_1}{\lambda_2} < \frac{9 \left(k_1-2 k_2 \right) \left( k_1-k_2\right)}{2 \sqrt{3} k_2^2}.
\end{equation}
Then the system \eqref{e:main} has the unique stationary solution $(x_1^*,y_1^*)=(0,0)$ which is globally asymptotically stable.
\end{thm}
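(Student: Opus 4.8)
The plan is to split Theorem~\ref{t:main} into (i) uniqueness of the equilibrium and (ii) global attractivity, working on the biologically relevant region $[0,\infty)^2$, which is positively invariant since on $x_i=0$ the flow points inward ($x_i'=Dx_j\ge 0$). Writing $f_i(x)=\lambda_i x(1-x/k_i)(x/k_i-\tfrac12)$ for the reaction term, I would first record the two elementary facts that drive everything: on $[0,k_i]$ one has $\max f_i=-\min f_i=\frac{\sqrt3}{36}\lambda_i k_i$ (attained at $x=\frac{k_i}{2}(1\pm\frac1{\sqrt3})$), while $\max f_i'=\lambda_i/4$ (attained at $x=k_i/2$). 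I would then rewrite the stationary equations as $x_2=\phi_1(x_1)$, $x_1=\phi_2(x_2)$ with $\phi_i(x):=x-f_i(x)/D$, equivalently $f_1(x_1)=-f_2(x_2)=D(x_1-x_2)$. The hypothesis $\max\{\lambda_1,\lambda_2\}<4D$ is precisely what makes each $\phi_i$ strictly increasing, since $\phi_i'=1-f_i'/D\ge 1-\lambda_i/(4D)>0$. As $\phi_i$ fixes every zero of $f_i$, i.e. $\phi_i(0)=0$, $\phi_i(k_i/2)=k_i/2$, $\phi_i(k_i)=k_i$, monotonicity forces $\phi_i$ to map each interval between consecutive breakpoints onto itself. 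This confinement is the structural payoff of strong diffusion, and is what lets the estimates be carried out without tracking $D$ explicitly.

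The core of the argument — and the step I expect to be the main obstacle — is ruling out a nonzero equilibrium. On the axes only the origin survives, and in the open quadrant $f_1(x_1)+f_2(x_2)=0$ forces $f_1(x_1)$ and $f_2(x_2)$ to have opposite signs (if either vanishes both must, and then $x_1=x_2$ would lie in the disjoint breakpoint sets, which is impossible). Two cases remain. If $f_1(x_1)>0>f_2(x_2)$ then $x_1\in(k_1/2,k_1)$, so by confinement $x_2=\phi_1(x_1)\in(k_1/2,k_1)\subset(k_2,\infty)$; hence $f_1(x_1)\le\frac{\sqrt3}{36}\lambda_1k_1$ while $|f_2(x_2)|\ge|f_2(k_1/2)|$ because $|f_2|$ increases past $k_2$. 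The equality $f_1(x_1)=|f_2(x_2)|$ is then impossible as soon as $\frac{\sqrt3}{36}\lambda_1k_1<|f_2(k_1/2)|$, and a direct computation of $|f_2(k_1/2)|$ shows this is exactly the upper bound on $\lambda_1/\lambda_2$ in \eqref{e:ineq:l1l2}. Symmetrically, if $f_2(x_2)>0>f_1(x_1)$ then $x_2\in(k_2/2,k_2)$ and $x_1=\phi_2(x_2)\in(k_2/2,k_2)\subset(0,k_1/2)$; now $f_2(x_2)\le\frac{\sqrt3}{36}\lambda_2k_2$ while $|f_1(x_1)|\ge\min\{|f_1(k_2/2)|,|f_1(k_2)|\}$, the minimum of the unimodal $|f_1|$ over a subinterval being attained at an endpoint. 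Excluding equality amounts to $\frac{\sqrt3}{36}\lambda_2k_2$ being below both endpoint values, and these two conditions unwind to precisely the two fractions in the lower bound (the maximum) of \eqref{e:ineq:l1l2}. The assumption $2k_2<k_1$ guarantees $k_2<k_1/2$, so the two active humps sit in disjoint, correctly ordered intervals and the factors $k_1-2k_2$ remain positive; the delicate bookkeeping is matching the extremal values $\frac{\sqrt3}{36}\lambda_i k_i$ against the boundary values $|f_2(k_1/2)|,|f_1(k_2/2)|,|f_1(k_2)|$ so as to reproduce exactly those three fractions.

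With uniqueness in hand, global asymptotic stability follows from the cooperative structure. The off-diagonal Jacobian entries equal $D>0$, so the flow is monotone, and the cubic decay $f_i(x)\sim-\lambda_i x^3/k_i^2$ makes any box $[0,M]^2$ with $M>k_1$ positively invariant and absorbing. Comparing an arbitrary trajectory from above with the solution starting at the strict supersolution $(M,M)$ — which is monotonically decreasing and, having no equilibrium to halt at except the origin, converges to it — and from below with the stationary solution at $0$, a squeezing argument forces every trajectory to the origin. Together with local stability from the (always Hurwitz) linearization, whose eigenvalues have negative real part regardless of parameters, this yields that $(0,0)$ is globally asymptotically stable, completing the proof.
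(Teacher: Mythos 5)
Your proof is correct, and it necessarily reproduces the same three decisive inequalities as the paper --- the extremal value $\tfrac{\sqrt{3}}{36}\lambda_i k_i$ of one cubic compared against the other cubic evaluated at the points $k_1/2$, $k_2/2$, $k_2$ --- but the scaffolding around those comparisons is genuinely different. The paper first rescales ($x=x_1/k_1$, $y=x_2/k_2$, $\tau=Dt$) and then argues through nullcline geometry: Lemma~\ref{lem:3r} confines nontrivial equilibria by the sign analysis of \eqref{e:3r:sign}, Lemma~\ref{lem:2r} shrinks the resulting regions via the tangent-line estimate $f(s)\le\tfrac14\left(s-\tfrac12\right)$ (this is where $\alpha,\beta<4$, i.e.\ $\max\{\lambda_1,\lambda_2\}<4D$, enters), and Lemmas~\ref{lem:omega1}--\ref{lem:omega2} exclude the surviving regions $\Omega_1,\Omega_2$ by comparing $\nu_x$ with linear bounds and invoking concavity of the nullclines. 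You instead spend the hypothesis $\lambda_i<4D$ on making the maps $\phi_i(x)=x-f_i(x)/D$ strictly increasing; since they fix the zeros $0,k_i/2,k_i$ of $f_i$, they map the intervals between consecutive zeros onto themselves, and this confinement does the work of Lemmas~\ref{lem:3r}--\ref{lem:2r}, while unimodality of $|f_i|$ on the relevant intervals (minimum of a quasiconcave function over a subinterval is attained at an endpoint) replaces the paper's concavity comparisons; your Case A matches Lemma~\ref{lem:omega2} and your Case B matches Lemma~\ref{lem:omega1}, fraction for fraction. The second real difference is the stability part: the paper dispatches global asymptotic stability in one sentence (invariant boxes plus nonexistence of limit cycles, implicitly Poincar\'e--Bendixson), whereas you exploit the cooperative structure (off-diagonal Jacobian entries equal to $D>0$): the trajectory from the supersolution $(M,M)$ decreases monotonically and must converge to the unique equilibrium, and monotone comparison squeezes every other trajectory; together with the always-Hurwitz linearization at the origin (trace negative, determinant $D(\lambda_1+\lambda_2)/2+\lambda_1\lambda_2/4>0$) this is a complete and self-contained argument that actually substantiates the claim the paper leaves terse. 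In short: same core inequalities and same role for each hypothesis, but your monotone-map confinement and monotone-systems squeezing give an unscaled, picture-free alternative to the paper's nullcline route, and a fuller proof of the global attractivity.
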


In other words, the sufficient condition from Thm.~\ref{t:main} requires that the first patch must be at least twice as large, diffusion strong enough, and the reaction strengths sufficiently close (inequalities \eqref{e:ineq:l1l2}). Comparable conditions have been obtained for similar models numerically \cite{Althagafi2021, Amarasekare1998, Amarasekare1998b}. 

In Sec.~\ref{sec:prelim} we reduce the number of parameters in the model~\eqref{e:main} and provide some straightforward results on the number of stationary solutions for similar capacities. In Sec.~\ref{sec:main} we prove Thm.~\ref{t:main} and in Sec.~\ref{sec:generalization} we generalize Thm.~\ref{t:main} to general bistabilities with arbitrary viability constants and provide explicit necessary and sufficient condition for sawtooth caricature. We conclude with a brief discussion on the optimality of our results and literature in Sec.~\ref{sec:discussion}.


\section{Preliminaries}\label{sec:prelim}
We first observe, that for small $D>0$ we have 9 stationary solutions.
\begin{thm}\label{t:smalld}
If $D>0$ is sufficiently small then the problem \eqref{e:main} has 9 nonnegative stationary solutions.
\end{thm}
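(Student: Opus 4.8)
The plan is to prove Theorem~\ref{t:smalld} by a singular-perturbation / decoupling argument: in the limit $D\to 0^{+}$ the two equations in \eqref{e:main} decouple completely, and each scalar equation

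\begin{equation*}
x_i' = \lambda_i x_i\left(1-\frac{x_i}{k_i}\right)\left(\frac{x_i}{k_i}-\frac12\right)
\end{equation*}

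has exactly three nonnegative equilibria, namely $x_i\in\{0,\,k_i/2,\,k_i\}$, corresponding to extinction, the Allee (viability) threshold, and the carrying capacity. Hence the uncoupled system has precisely $3\times 3 = 9$ nonnegative stationary solutions, given by all pairs $(a,b)$ with $a\in\{0,k_1/2,k_1\}$ and $b\in\{0,k_2/2,k_2\}$. The goal is to show that each of these nine roots persists, and that no new roots appear, once a small diffusion term is switched on.

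First I would set up the stationary system as $F(x_1,x_2;D)=0$, where $F=(F_1,F_2)$ collects the right-hand sides of \eqref{e:main}. At $D=0$ the Jacobian $D_xF$ is block-diagonal with diagonal entries $g_i'(x_i^{0})$, where $g_i$ is the scalar reaction term. The key observation is that each of the three roots $0,k_i/2,k_i$ of the cubic $g_i$ is \emph{simple} (the cubic has distinct roots), so $g_i'\neq 0$ at each of them; consequently $D_xF$ is nonsingular at every one of the nine equilibria when $D=0$. By the implicit function theorem, each of these nine roots extends to a smooth branch $(x_1^*(D),x_2^*(D))$ that remains a nondegenerate root for all sufficiently small $D>0$, and by continuity these branches stay nonnegative (the interior roots stay strictly positive, while the extinction root stays at the origin). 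This yields at least nine nonnegative stationary solutions for small $D$.

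The remaining and genuinely harder part is to rule out \emph{extra} stationary solutions for small $D>0$, i.e. to show the count is exactly nine rather than merely at least nine. The clean way is a degree-theoretic or compactness argument: one confines all nonnegative equilibria to a fixed compact box (a standard invariant-region estimate shows that any stationary solution must satisfy $0\le x_i\le k_i$, since the vector field points inward on the boundary of $[0,k_1]\times[0,k_2]$ once $D$ is small), and then argues that on this compact set the solution map $D\mapsto\{\text{zeros of }F(\cdot;D)\}$ is upper semicontinuous. If a spurious ninth-plus root existed along a sequence $D_n\to 0$, a subsequence would converge to a zero of the $D=0$ system, i.e. to one of the nine known roots; but near each of those the implicit function theorem already guarantees local uniqueness, giving a contradiction for $n$ large. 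I expect this exclusion step to be the main obstacle, since it requires uniform (in $D$) a~priori bounds together with the local-uniqueness input; an alternative, and perhaps cleaner, route is to compute the Brouwer degree of $F$ on the box and match it against the sum of local indices $\pm1$ of the nine nondegenerate roots, thereby pinning the total count at nine.
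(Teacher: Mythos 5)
Your first half is exactly the paper's proof: at $D=0$ the system decouples, each cubic $g_i(s)=\lambda_i s\left(1-s/k_i\right)\left(s/k_i-1/2\right)$ has the three simple roots $0,\,k_i/2,\,k_i$, the Jacobian of the stationary system is diagonal with nonzero entries $g_i'$ at these roots, and the implicit function theorem continues all nine equilibria to small $D>0$. The paper says precisely this in two lines and refers to \cite{Stehlik2017} for details. Your additional exclusion step (no extra equilibria for small $D$) goes beyond the paper's sketch and is the right thing to worry about if the statement is read as ``exactly nine''; the compactness/upper-semicontinuity scheme, or equivalently the degree count against the sum of local indices, is a sound way to close it.

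Two details in your write-up are wrong as stated, though both are repairable. First, the a priori bound: the box $[0,k_1]\times[0,k_2]$ is \emph{not} invariant and does \emph{not} contain all nonnegative equilibria. On its edge $x_2=k_2$, $x_1\in(k_2,k_1]$, one has $x_2'=D(x_1-k_2)+g_2(k_2)=D(x_1-k_2)>0$, i.e.\ the field points outward; in fact the branch emanating from $(k_1,k_2)$ satisfies
\begin{equation*}
\left.\frac{dx_2}{dD}\right|_{D=0}=-\frac{k_1-k_2}{g_2'(k_2)}=\frac{2(k_1-k_2)}{\lambda_2}>0,
\end{equation*}
so $x_2>k_2$ for all small $D>0$ (this is also visible in Lemma~\ref{lem:3r}, where $\widehat{\Omega}_2$ allows $y>1$). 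With your box, the degree/compactness argument would actually miss this ninth solution. The correct region is $[0,k_1]^2$ (the paper uses $[0,k]^2$, $k\ge k_1$, in the proof of Theorem~\ref{t:main}): if $(x_1,x_2)$ is a nonnegative equilibrium and $x_i$ is its largest coordinate, then $D(x_j-x_i)\le 0$ forces $g_i(x_i)\ge 0$, hence $x_i\le k_i\le k_1$; with this box your limiting argument goes through verbatim. Second, for the four roots with one vanishing coordinate, e.g.\ $(0,k_2/2)$, ``continuity'' alone does not give nonnegativity of the continued branch, since the zero coordinate could a priori move to negative values. It does not: either compute $\left.dx_1/dD\right|_{D=0}=-(k_2/2)/g_1'(0)=k_2/\lambda_1>0$, or observe that $x_1\le 0$ near such a point would make $D(x_2-x_1)+g_1(x_1)$ strictly positive. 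Both fixes are routine, but they are needed for the ``nonnegative'' claim in the statement.
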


\begin{proof}
    The results follows by a standard observation that for $D=0$ there are 9 stationary solutions and the application of the implicit function theorem which is enabled by nonzero derivatives of the cubic $f$ in $0,1/2,1$. See, e.g., \cite{Stehlik2017} for more details.
\end{proof}

Using $ x = x_1/k_1, y = x_2/k_2, \tau =  D t $, we reduce the number of parameters in~\eqref{e:main} and consider the system
\begin{equation}\label{e:balanced}
    \begin{cases}
    \displaystyle x' = \gamma y-x + \alpha f(x),\\
    \displaystyle y' = \frac{1}{\gamma}x - y + \beta f(y).    
    \end{cases}
\end{equation}
where $f(s):=s(1-s)(s-1/2)$ and
\begin{equation}\label{e:parameters}
    \gamma = \frac{k_2}{k_1}\in(0,1],\quad \alpha= \frac{\lambda_1}{D}>0,\quad \beta=\frac{\lambda_2}{D}>0.
\end{equation}



\begin{figure}
\begin{minipage}[c]{0.45\linewidth}
\includegraphics[width=\linewidth]{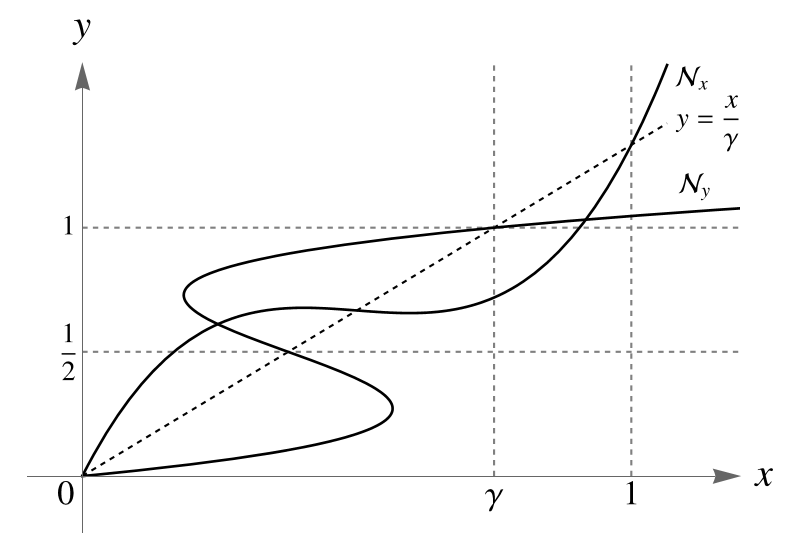}
\caption{Illustration of Lemma~\ref{lem:gamma:gr:onehalf}. Nullclines $\mathcal{N}_x$ and $\mathcal{N}_y$ from \eqref{e:nullclines} for $\alpha=6$, $\beta=8$\label{fig:nullclines}}
\end{minipage}
\hfill
\begin{minipage}[c]{0.45\linewidth}
\includegraphics[width=\linewidth]{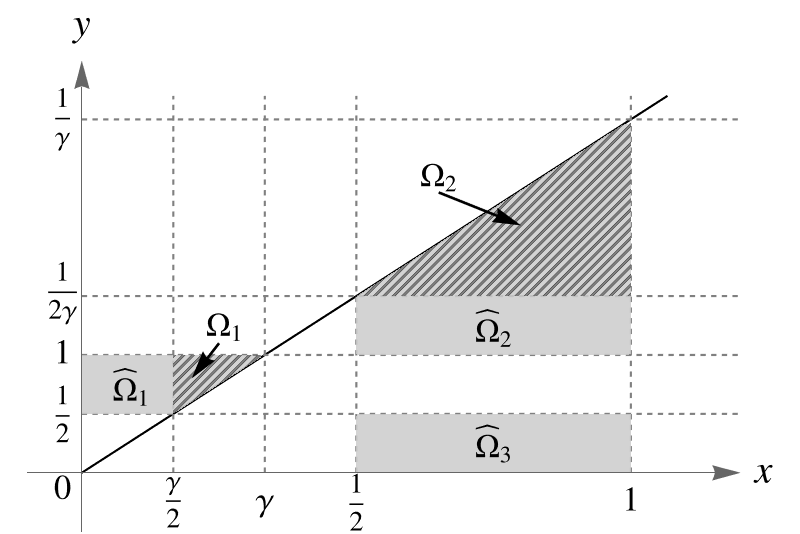}
\caption{A priori estimates $\widehat{\Omega}_i$ from Lemma~\ref{lem:3r} (light gray regions) and Lemma~\ref{lem:2r} (the hatched subsets $\Omega_i\subset \widehat{\Omega}_i$).}\label{fig:regions_omegas}
\end{minipage}
\end{figure}


Next, we show that there is always a nontrivial equilibrium once $\gamma\in[1/2,1)$, i.e., in the case in which the capacities do not differ too much.
\begin{lem}\label{lem:gamma:gr:onehalf}
Let $\alpha,\beta>0$, then the problem \eqref{e:balanced} has 
\begin{itemize}
    \item at least three nonnegative stationary solutions if $\gamma \in (1/2, 1)$,
    \item at least two nonnegative stationary solutions if $\gamma=1/2$.
\end{itemize}
\end{lem}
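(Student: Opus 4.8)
The plan is to locate the nontrivial stationary solutions as zeros of a single scalar function obtained by restricting $x'$ to the nullcline $\mathcal{N}_y$. The origin is always a stationary solution (since $f(0)=0$), so it suffices to produce two further nonnegative equilibria when $\gamma\in(1/2,1)$ and one when $\gamma=1/2$. I parametrise $\mathcal{N}_y$ by its height: writing $\psi(y):=\gamma\bigl(y-\beta f(y)\bigr)$ for the $x$-coordinate of $\mathcal{N}_y$ at level $y$, I set $g(y):=\gamma y-\psi(y)+\alpha f(\psi(y))$, which is exactly $x'$ evaluated along $\mathcal{N}_y$ and hence measures the signed vertical gap between $\mathcal{N}_y$ and $\mathcal{N}_x$. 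Every stationary solution of \eqref{e:balanced} lies on $\mathcal{N}_y$, so its height is a zero of $g$; conversely a zero $y^*$ of $g$ with $y^*\ge 0$ and $\psi(y^*)\ge 0$ yields the nonnegative equilibrium $(\psi(y^*),y^*)$, and distinct zeros give distinct equilibria.

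The key observation is that $f$ vanishes at $1/2$ and $1$, so for $y\in\{1/2,1\}$ one has $\psi(y)=\gamma y$ and therefore $g(y)=\alpha f(\gamma y)$. For $\gamma\in(1/2,1)$ this gives $g(1/2)=\alpha f(\gamma/2)<0$ (since $\gamma/2\in(1/4,1/2)$, where $f<0$) and $g(1)=\alpha f(\gamma)>0$ (since $f>0$ on $(1/2,1)$); moreover $g(y)\to-\infty$ as $y\to\infty$, because $\psi(y)\sim\gamma\beta y^3$ and the term $\alpha f(\psi(y))\sim-\alpha\psi(y)^3$ dominates. The two sign changes of $g$ — from $g(1/2)<0$ to $g(1)>0$, and from $g(1)>0$ to $-\infty$ — produce zeros $y_a\in(1/2,1)$ and $y_b\in(1,\infty)$, which together with the origin give the three required solutions. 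For the boundary case $\gamma=1/2$ the same computation instead yields $g(1)=\alpha f(1/2)=0$, i.e. the explicit stationary point $(\psi(1),1)=(1/2,1)$; one checks directly, using $f(1/2)=f(1)=0$, that $(1/2,1)$ solves \eqref{e:balanced}, delivering the two solutions, and the degeneracy $g(1)=0$ at $\gamma=1/2$ is precisely why only two are claimed there.

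The main obstacle is nonnegativity of the $x$-coordinate: for large $\beta$ the curve $\mathcal{N}_y$ dips into $x<0$, so I must check $\psi(y_a),\psi(y_b)\ge 0$ at the zeros above. For $y_b$ this is immediate, since $f(y)\le 0$ for $y\ge 1$ gives $\psi(y)\ge\gamma y>0$ on $(1,\infty)$. For $y_a$ I refine the first sign change on $[1/2,1]$, where the only possible dip occurs. Here $\psi(1/2)=\gamma/2>0$, so letting $y_0$ be the first zero of $\psi$ to the right of $1/2$ (and $y_0:=1$ if $\psi$ has no zero in $(1/2,1)$) one has $\psi>0$ on $[1/2,y_0)$. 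At $y_0$ the function $g$ is positive in either case — if $\psi(y_0)=0$ then $g(y_0)=\gamma y_0>0$, while if $y_0=1$ then $g(1)=\alpha f(\gamma)>0$ — so $g(1/2)<0<g(y_0)$ and the intermediate value theorem yields $y_a\in(1/2,y_0)$ with $\psi(y_a)>0$. Since $y_a<1<y_b$, the two nontrivial equilibria are distinct from each other and from the origin, completing the count. The remaining points are routine: continuity of $g$, strictness of the stated signs, and the asymptotics $g(y)\to-\infty$.
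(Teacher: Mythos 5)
Your proof is correct. It takes a recognizably different route from the paper's: the paper works geometrically with both nullclines \eqref{e:nullclines}, comparing their positions against the reference line $\ell: y=x/\gamma$ (both cross $\ell$ exactly at the zeros of $f$, because $\mathcal{N}_x$ deviates from $\ell$ by $-\alpha f(x)/\gamma$ and $\mathcal{N}_y$ by $-\gamma\beta f(y)$) and then infers three intersection points by continuity, additionally locating them ($y_2^*\in(1/2,1)$, $x_3^*\in(\gamma,1)$, $y_3^*>y_2^*$). You instead collapse the two-dimensional intersection problem to a one-dimensional root count: restricting $x'$ to $\mathcal{N}_y$ gives a polynomial $g(y)=\gamma y-\psi(y)+\alpha f(\psi(y))$, and the zeros $f(1/2)=f(1)=0$ make the sign checks $g(1/2)=\alpha f(\gamma/2)<0<\alpha f(\gamma)=g(1)$ immediate, with the cubic growth of $f$ supplying the third sign change at infinity. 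The two arguments rest on the same structural facts (the sign pattern of $f$ and its zeros at $0,1/2,1$), but your scalar reduction buys genuine rigor where the paper is terse: it sidesteps the issue that $\mathcal{N}_y$ is a graph over $y$ rather than $x$ (so ``counting intersections by continuity'' needs care), and your $y_0$-argument explicitly rules out the one real danger --- that for large $\beta$ the curve $\mathcal{N}_y$ dips into $x<0$ on $(1/2,1)$, so an intersection found there might fail to be nonnegative --- a point the paper's proof does not address at all. What the paper's geometric version buys in exchange is brevity and sharper positional information about the nontrivial equilibria, in the same spirit as the a priori regions it later constructs in Lemma~\ref{lem:3r} for the regime $\gamma\in(0,1/2)$.
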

\begin{proof}
    For $\gamma = 1/2,$ we have a non-trivial solution $x^*=1/2, y^*=1$. For $\gamma<1/2$, stationary solutions of~\eqref{e:balanced} lie on intersections of continuous nullclines $\mathcal{N}_x$, $\mathcal{N}_y$ given by
    \begin{equation}\label{e:nullclines}
        \mathcal{N}_x: y=\frac{x}{\gamma} - \alpha \frac{f(x)}{\gamma}=:\nu_x(x),\quad \mathcal{N}_y: y=\frac{x}{\gamma} + \beta f(y).
    \end{equation}
    Comparing the positions of $\mathcal{N}_x$, $\mathcal{N}_y$ with respect to the line $\ell: y=\frac{x}{\gamma}$, we observe that 
    $\mathcal{N}_x$ lies above $\ell$ if $x\in(0,1/2)\cup(1,\infty)$ and $\mathcal{N}_y$ lies above the line $\ell$ if $y\in(1/2,1)$. Consequently, there are at least three intersections of $\mathcal{N}_x$ and $\mathcal{N}_y$. These three intersections $(x_i^*,y_i^*)$, $i=1,2,3$ satisfy (due to continuity) $x_1^*=y_1^*=0$, $y_2^*\in(1/2,1)$, and $x_3^*\in(\gamma,1)$. Since for $x> \gamma, \ \mathcal{N}_y$ is located above the line $y=1$, we have that $y_3^*>y_2^*$, see Fig.~\ref{fig:nullclines}.
\end{proof}

\section{Global extinction}\label{sec:main}
In this section we consider $\gamma\in(0,1/2)$ and prove our main result, Thm.~\ref{t:main}. We begin with auxiliary a priori estimates on nontrivial solutions, see Fig.~\ref{fig:regions_omegas}.
\begin{lem}\label{lem:3r}
Let $\alpha,\beta>0$, $\gamma \in (0, \, 1/2)$ and $(x^*,y^*)$ be a non-trivial stationary solution of \eqref{e:balanced}. Then $(x^*,y^*)\in \widehat{\Omega}_1 \cup \widehat{\Omega}_2 \cup \widehat{\Omega}_3 $, where

\begin{align}\label{e:3r}
    \begin{split}
        \widehat{\Omega}_1 &:= \left\{ (x, \, y) \in \mathbb{R}^2: x \in (0, \, \gamma), \, y \in \left( \frac{1}{2}, \,1 \right), \, y> \frac{x}{\gamma} \right\}, \\
        \widehat{\Omega}_2 &:= \left\{ (x, \, y) \in \mathbb{R}^2: x \in \left(\frac{1}{2}, \, 1\right), \, y \in \left( 1, \,\frac{x}{\gamma} \right) \right\},\\
        \widehat{\Omega}_3 &:= \left\{ (x, \, y) \in \mathbb{R}^2: x \in \left(\frac{1}{2}, \, 1 \right), \, y \in \left( 0, \, \frac{1}{2} \right) \right\}.
    \end{split}
\end{align}
\end{lem}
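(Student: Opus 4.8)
The plan is to characterise every nontrivial equilibrium purely through the signs of the cubic $f$ at its two coordinates. A stationary solution of \eqref{e:balanced} satisfies
\[
y^*-\frac{x^*}{\gamma}=-\frac{\alpha}{\gamma}\,f(x^*),\qquad y^*-\frac{x^*}{\gamma}=\beta\,f(y^*),
\]
which are exactly the statements that $(x^*,y^*)$ lies on $\mathcal{N}_x$ and on $\mathcal{N}_y$. Eliminating the common left-hand side yields the key identity $\alpha f(x^*)=-\gamma\beta f(y^*)$. Since $\alpha,\beta,\gamma>0$, this forces $f(x^*)$ and $f(y^*)$ to have strictly opposite signs, unless both vanish; and if one vanishes the identity makes the other vanish as well, in which case the first equation gives $y^*=x^*/\gamma$ with $x^*,y^*\in\{0,\tfrac12,1\}$, and $\gamma<\tfrac12$ leaves only the trivial point $(0,0)$. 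Hence for a nontrivial equilibrium exactly one of $f(x^*),f(y^*)$ is positive and the other negative, and in particular $x^*,y^*\notin\{0,\tfrac12,1\}$.

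First I would record the sign pattern $f(s)>0$ on $(-\infty,0)\cup(\tfrac12,1)$ and $f(s)<0$ on $(0,\tfrac12)\cup(1,\infty)$. Combined with $y^*-x^*/\gamma=-\alpha f(x^*)/\gamma$, the sign of $f(x^*)$ simultaneously locates $x^*$ in one of these intervals and tells us on which side of the line $\ell:y=x/\gamma$ the equilibrium sits: it lies above $\ell$ exactly when $f(x^*)<0$ and below exactly when $f(x^*)>0$. This reduces the whole proof to two sign cases.

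If $f(x^*)<0$ and $f(y^*)>0$, then $x^*\in(0,\tfrac12)\cup(1,\infty)$, $y^*\in(-\infty,0)\cup(\tfrac12,1)$, and the point lies above $\ell$, i.e.\ $y^*>x^*/\gamma$. Positivity $y^*>x^*/\gamma>0$ discards $y^*<0$, and $x^*>1$ together with $\gamma<\tfrac12$ would force $y^*>x^*/\gamma>2$, contradicting $y^*<1$; the only survivor is $x^*\in(0,\tfrac12)$, $y^*\in(\tfrac12,1)$, where $y^*<1$ and $y^*>x^*/\gamma$ also force $x^*<\gamma$, which is precisely $\widehat{\Omega}_1$. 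Symmetrically, if $f(x^*)>0$ and $f(y^*)<0$ the point lies below $\ell$, i.e.\ $y^*<x^*/\gamma$; the branches $x^*<0$ are impossible because they would force $y^*<x^*/\gamma<0$ against $y^*>0$, leaving $x^*\in(\tfrac12,1)$ with either $y^*\in(0,\tfrac12)$ (region $\widehat{\Omega}_3$, the inequality $y^*<x^*/\gamma$ being automatic) or $y^*\in(1,x^*/\gamma)$ (region $\widehat{\Omega}_2$).

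The argument is driven entirely by the opposite-sign identity $\alpha f(x^*)=-\gamma\beta f(y^*)$, so I expect no genuine analytic obstacle; the real work is the bookkeeping of the finitely many sign combinations. The one place that needs care is the systematic elimination of the spurious branches with negative coordinates or with a coordinate exceeding $1$: each is discarded by combining the above/below-$\ell$ information with the hypothesis $\gamma<\tfrac12$ (equivalently $2k_2<k_1$), which is exactly where that assumption enters.
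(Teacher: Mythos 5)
Your proof is correct and follows essentially the same route as the paper: both derive the identity $\gamma y - x = -\alpha f(x) = \beta\gamma f(y)$ from the nullclines, conclude that $f(x^*)$ and $f(y^*)$ must have opposite signs, and then locate the solution by a sign/case analysis relative to the line $y = x/\gamma$ using $\gamma < \tfrac12$. If anything, your version is slightly more thorough than the paper's, since you explicitly rule out the degenerate case $f(x^*)=f(y^*)=0$ and the branches with negative coordinates, which the paper's proof passes over silently.
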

\begin{proof}
The stationary solution of \eqref{e:balanced} has to comply with~\eqref{e:nullclines}
which implies that
\begin{equation}\label{e:3r:sign}
\gamma y -x = -\alpha f(x) = \beta \gamma f(y).    
\end{equation}

Since $\alpha,\beta, \gamma>0$ the values of $f(x)$ and $f(y)$ must have opposite signs. We distinguish between two cases. First, let us assume that $f(x)<0$ and $f(y)>0$. This implies that $y\in(1/2,1)$ and $x<\gamma$, since
\[
0< \gamma y - x < \gamma -x 
\]
This defines the set $\widehat{\Omega}_1$.

Next, let $f(x)>0$ which leads to $x\in(1/2,1)$ and $f(y)<0$. We have two subcases. If $y\in(0,1/2)$ we get the set $\widehat{\Omega}_3$. Similarly we arrive to $\widehat{\Omega}_2$ by considering $y>1$ and observing that $y<x/\gamma$ from \eqref{e:3r:sign}.
\end{proof}


Once we consider small values of $\alpha, \beta$ (i.e., large diffusion $D$ in the original problem \eqref{e:main}) we can further reduce the regions $\widehat{\Omega}_i$ from~\eqref{e:3r}, see Fig.~\ref{fig:regions_omegas}.
\begin{lem}\label{lem:2r}
Let $\alpha,\beta\in(0,4)$, $\gamma \in (0, 1/2)$ and $(x^*,y^*)$ be a non-trivial stationary solution of \eqref{e:balanced}. Then $(x^*,y^*)\in \Omega_1 \cup \Omega_2$, where

\begin{align}\label{e:2r}
    \begin{split}
        \Omega_1 &= \left\{ (x, \, y) \in \mathbb{R}^2: x \in \left(\frac{\gamma}{2}, \, \gamma \right), \, y \in \left( \frac{x}{\gamma}, \,1 \right)\right\}, \\
        \Omega_2 &= \left\{ (x, \, y) \in \mathbb{R}^2: x \in \left(\frac{1}{2}, \, 1 \right), \, y \in \left( \frac{1}{2\gamma}, \, \frac{x}{\gamma}\right) \right\}.
    \end{split}
\end{align}
\end{lem}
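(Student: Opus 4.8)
The plan is to sharpen the three a priori regions $\widehat{\Omega}_i$ of Lemma~\ref{lem:3r} using the extra hypothesis $\alpha,\beta\in(0,4)$, which controls how strongly the nullclines \eqref{e:nullclines} are bent away from the line $\ell$ near the inflection level $1/2$. The single analytic fact driving everything is the following estimate: for every $c\in(0,4)$ and every $s\in(1/2,1)$ one has $s-cf(s)>1/2$. I would prove it by the substitution $u:=s-1/2\in(0,1/2)$, under which $f(s)=\tfrac{u}{4}-u^3$, so that
\begin{equation*}
s-cf(s)-\tfrac12 \;=\; u\left(1-\tfrac{c}{4}\right)+cu^3 \;>\;0,
\end{equation*}
both summands being positive precisely because $c<4$ and $u>0$. (The threshold $c=4$ is sharp, being the value at which the slope $1-cf'(1/2)=1-c/4$ of the deformed graph at $s=1/2$ changes sign.)

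I would then feed this estimate into each of the three cases of Lemma~\ref{lem:3r} using whichever nullcline is convenient. The elimination of $\widehat{\Omega}_3$ and the refinement of $\widehat{\Omega}_2$ both use $\mathcal{N}_x$, on which $y=(x-\alpha f(x))/\gamma$. For $x\in(1/2,1)$ and $\alpha<4$ the estimate gives $x-\alpha f(x)>1/2$, hence $y>\tfrac{1}{2\gamma}>1$ since $\gamma<1/2$. In $\widehat{\Omega}_3$ this contradicts $y<1/2$, so $\widehat{\Omega}_3$ carries no nontrivial solution; in $\widehat{\Omega}_2$ it upgrades the lower bound $y>1$ to $y>\tfrac{1}{2\gamma}$, which is exactly $\Omega_2$.

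The refinement of $\widehat{\Omega}_1$ instead uses $\mathcal{N}_y$, written as $x=\gamma\bigl(y-\beta f(y)\bigr)$. For $y\in(1/2,1)$ and $\beta<4$ the same estimate yields $y-\beta f(y)>1/2$, so $x>\gamma/2$. Combined with the bounds $x<\gamma$ and $x/\gamma<y<1$ already inherited from $\widehat{\Omega}_1$ (here $x>\gamma/2$ forces $x/\gamma>1/2$, so the sharpened lower bound $y>x/\gamma$ absorbs the old constraint $y>1/2$), this is precisely $\Omega_1$, completing the proof.

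I do not expect a genuine obstacle once the inequality $s-cf(s)>1/2$ is isolated: the remainder is bookkeeping about which of the two equalities in \eqref{e:3r:sign} to read off in which region. The only mildly delicate point is choosing the correct nullcline per region---$\mathcal{N}_x$ yields a lower bound on $y$, useful in $\widehat{\Omega}_2$ and $\widehat{\Omega}_3$, while $\mathcal{N}_y$ yields a lower bound on $x$, useful in $\widehat{\Omega}_1$---and verifying in each case that the new bound indeed subsumes the coarser one from Lemma~\ref{lem:3r}.
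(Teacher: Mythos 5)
Your proof is correct and follows essentially the same route as the paper: both arguments use the hypothesis $\alpha,\beta\in(0,4)$ to show that on $\mathcal{N}_x$ one has $y>\tfrac{1}{2\gamma}$ whenever $x>\tfrac12$ (eliminating $\widehat{\Omega}_3$ and shrinking $\widehat{\Omega}_2$ to $\Omega_2$), and that on $\mathcal{N}_y$ one has $x>\tfrac{\gamma}{2}$ whenever $y\in(\tfrac12,1)$ (shrinking $\widehat{\Omega}_1$ to $\Omega_1$). The only cosmetic difference is that you verify the scalar inequality $s-cf(s)>\tfrac12$ exactly via the substitution $u=s-\tfrac12$, whereas the paper deduces it from the concavity bound $f(s)\leq\tfrac14\left(s-\tfrac12\right)$; the two estimates play the identical role.
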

\begin{proof}
The concavity of $f(s)$ on $(1/2,1)$ yields
\[
f(s)\leq \frac{1}{4} \left( s- \frac{1}{2} \right)
\]
and we can rewrite \eqref{e:nullclines} into 
\begin{align*}
    \displaystyle  y &= \frac{x-  \alpha f(x)}{\gamma} \geq \frac{x- \frac{\alpha}{4} \left(x - \frac{1}{2} \right) }{\gamma} = \frac{x\left(1 - \frac{\alpha}{4} \right) + \frac{\alpha}{8}}{\gamma} \quad \text{ if } x>
    \frac{1}{2},\\ 
\displaystyle  x &=  \gamma \left( y-  \beta f(y) \right) \geq \gamma \left(y- \frac{\beta}{4} \left(y - \frac{1}{2} \right) \right) = \gamma \left( y \left( 1 - \frac{\beta}{4} \right) + \frac{\beta}{8} \right) \quad \text{ if } y>
    \frac{1}{2}.
\end{align*}
If $\alpha,\beta\in(0,4)$ we have $1-\alpha/4>0$ and $1-\beta/4>0$ which implies
\begin{align*}
    \displaystyle  y &> \frac{1}{2\gamma} \quad \text{ if } x>
    \frac{1}{2} \quad \text{ and}\quad x >  \frac{\gamma}{2} \quad \text{ if } y>
    \frac{1}{2}.
\end{align*}
The former inequality eliminates $\widehat{\Omega}_3$ and reduces $\widehat{\Omega}_2$ into ${\Omega}_2$. The latter then reduces $\widehat{\Omega}_1$ into ${\Omega}_1$. 
\end{proof}

We next eliminate the possible location of stationary solution in $\Omega_1$.
\begin{lem}\label{lem:omega1}
Let $\alpha,\beta\in(0,4)$, $\gamma \in (0, 1/2)$ and assume
\begin{equation} \label{e:alpha:omega1}
\displaystyle \alpha > \beta\cdot  \max \left\{ \frac{2 \sqrt{3}}{9 \left( 1-\gamma \right) \left(2-\gamma \right)}, \, \frac{\sqrt{3}}{18 \left(1- 2 \gamma \right) \left(1- \gamma \right)} \right\},
\end{equation}
then the system \eqref{e:balanced} has no stationary solution located in $\Omega_1$ defined by \eqref{e:2r}.
\end{lem}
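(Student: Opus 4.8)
The plan is to reduce the claim to a one-variable bound on the ratio $\alpha/\beta$. At any stationary solution in $\Omega_1$, equating the two expressions in \eqref{e:3r:sign} gives $-\alpha f(x^*) = \beta\gamma f(y^*)$. Since membership in $\Omega_1$ forces $x^* \in (\gamma/2,\gamma)\subset(0,1/2)$ and $y^*\in(1/2,1)$, we have $f(x^*)<0$ and $f(y^*)>0$, so both sides are positive and
\[
\frac{\alpha}{\beta} = \frac{\gamma\, f(y^*)}{-f(x^*)}.
\]
Thus it suffices to bound the right-hand side from above by the quantity in \eqref{e:alpha:omega1}; if $\alpha/\beta$ strictly exceeds that bound, no such solution can exist.

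Because the numerator depends only on $y^*$ and the denominator only on $x^*$, I would bound the two independently (the coupling $y>x/\gamma$ defining $\Omega_1$ can only shrink the feasible set, so a bound over the full box still applies). For the numerator, $f$ is maximized on $(1/2,1)$ at $y=\tfrac12+\tfrac{\sqrt3}{6}$ with value $\tfrac{\sqrt3}{36}$, hence $f(y^*)\le \tfrac{\sqrt3}{36}$. For the denominator the key observation is that $-f$ is concave on $(0,1/2)$, since $(-f)''(s)=6s-3<0$ there; a concave function lies above its chord, so on $[\gamma/2,\gamma]$ it attains its minimum at an endpoint:
\[
-f(x^*)\ \ge\ \min\Bigl\{-f(\tfrac{\gamma}{2}),\,-f(\gamma)\Bigr\},\qquad x^*\in(\tfrac{\gamma}{2},\gamma).
\]

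Evaluating, $-f(\gamma/2)=\tfrac{\gamma(1-\gamma)(2-\gamma)}{8}$ and $-f(\gamma)=\tfrac{\gamma(1-\gamma)(1-2\gamma)}{2}$, both positive for $\gamma\in(0,1/2)$. Combining the two estimates and using $1/\min\{a,b\}=\max\{1/a,1/b\}$,
\[
\frac{\alpha}{\beta}=\frac{\gamma f(y^*)}{-f(x^*)} \le \gamma\cdot\frac{\sqrt3}{36}\cdot\max\Bigl\{\tfrac{8}{\gamma(1-\gamma)(2-\gamma)},\,\tfrac{2}{\gamma(1-\gamma)(1-2\gamma)}\Bigr\} = \max\Bigl\{\tfrac{2\sqrt3}{9(1-\gamma)(2-\gamma)},\,\tfrac{\sqrt3}{18(1-2\gamma)(1-\gamma)}\Bigr\},
\]
which is exactly the bound in \eqref{e:alpha:omega1}. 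This contradicts the hypothesis, so \eqref{e:balanced} has no stationary solution in $\Omega_1$.

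The only non-routine idea is recognizing that $-f$ is concave on $(0,1/2)$, which decouples the optimization and reduces it to comparing the two endpoints $x=\gamma/2$ and $x=\gamma$ of the admissible $x$-interval for $\Omega_1$; these produce precisely the two terms of the maximum. The remainder is the global bound $f\le\sqrt3/36$ on $(1/2,1)$ and routine algebra, the only thing to verify being that the endpoint evaluations simplify exactly to the constants in \eqref{e:alpha:omega1}.
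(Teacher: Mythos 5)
Your proof is correct and is essentially the paper's own argument in algebraic form: your bound $f(y^*)\le \sqrt{3}/36$ is the paper's estimate $x\ge\Psi(y)$ on $\mathcal{N}_y$, and your concavity-of-$(-f)$ endpoint comparison at $x=\gamma/2$ and $x=\gamma$ is exactly the paper's comparison of the concave nullcline $\nu_x$ with $\Psi^{-1}$ at those same two endpoints, producing the same two constants in \eqref{e:alpha:omega1}. The only difference is presentational---you package the estimates as a decoupled bound on the ratio $\alpha/\beta$ at a hypothetical equilibrium rather than as a geometric separation of nullclines---which, if anything, reads more cleanly than the paper's version (whose displayed values of $\Psi^{-1}$ even contain a typo, $\Psi^{-1}(\gamma/2)$ written twice).
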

\begin{proof}
    The cubic $f(x)$ satisfies $f(x)\leq \frac{\sqrt{3}}{36}$ for $x\in(0,1)$. Consequently, $\mathcal{N}_y$ in~\eqref{e:nullclines} implies
    \begin{equation}\label{e:Ny:estimate}
        x=\gamma(y-\beta f(y)) \geq  \gamma\left(y-\beta\frac{\sqrt{3}}{36}\right)=:\Psi(y).
    \end{equation}
    Since the nullcline $\mathcal{N}_x$ in~\eqref{e:nullclines} satisfies
    \[
        \nu_x(\gamma/2)=\frac{1}{2}-\frac{1}{2} \alpha \left(1-\frac{\gamma }{2}\right) \left(\frac{\gamma }{2}-\frac{1}{2}\right),\quad \nu_x(\gamma)=1-\alpha (1-\gamma ) \left(\gamma -\frac{1}{2}\right)
    \]
    and is concave on $(\gamma/2,\gamma),$ we can only compare these two values with those of $\Psi^{-1}$, i.e.
    \[
        \Psi^{-1}(\gamma/2)=\frac{1}{2} + \beta\frac{\sqrt{3}}{36},\quad \Psi^{-1}(\gamma/2)=1 + \beta\frac{\sqrt{3}}{36}.
    \]
    The assumption \eqref{e:alpha:omega1} implies $\nu_x(\gamma/2)>\Psi^{-1}(\gamma/2)$ and $\nu_x(\gamma)>\Psi^{-1}(\gamma)$ which finishes the proof.    
\end{proof}
Note that for $\gamma\in(0,2/7)$ the first fraction in \eqref{e:alpha:omega1} is greater, for $\gamma=2/7$ both are equal, and for $\gamma\in(2/7,1/2)$ the latter is greater. In the same spirit, we eliminate the possible location of stationary solution in $\Omega_2$.
\begin{lem}\label{lem:omega2}
Let $\alpha,\beta\in(0,4)$, $\gamma \in (0, 1/2)$ and assume
\begin{equation} \label{e:alpha:omega2}
\displaystyle \alpha < \frac{9 \left(1-2\gamma\right) \left( 1- \gamma \right)}{2 \sqrt{3} \gamma^2} \beta.
\end{equation}
Then the system \eqref{e:balanced} has no stationary solution located in $\Omega_2$ defined by \eqref{e:2r}.
\end{lem}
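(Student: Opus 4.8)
The plan is to mirror the nullcline comparison used for $\Omega_1$ in Lemma~\ref{lem:omega1}, but now exploiting that $\alpha$ is bounded \emph{above}. Throughout $\Omega_2$ defined by \eqref{e:2r} one has $x\in(1/2,1)$, so $f(x)\in(0,\tfrac{\sqrt3}{36}]$, and $y>\tfrac{1}{2\gamma}>1$, so $f(y)<0$. First I would use the global bound $f(x)\le\frac{\sqrt3}{36}$ to push the nullcline $\mathcal{N}_x$ below a line: since $\nu_x(x)=\frac{x}{\gamma}-\frac{\alpha}{\gamma}f(x)$, subtracting the maximal value of $f$ gives the lower estimate $\nu_x(x)\ge \Phi(x):=\frac{x}{\gamma}-\frac{\alpha\sqrt3}{36\gamma}$ on $(1/2,1)$.

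Next I would rewrite $\mathcal{N}_y$ from \eqref{e:nullclines} in the same $y$-versus-$x$ form. Writing $f(y)=-|f(y)|$ for $y>1$, the relation $x=\gamma(y-\beta f(y))$ becomes $y=\frac{x}{\gamma}-\beta|f(y)|$. Comparing with $\Phi$, a point $(x,y)$ of $\mathcal{N}_y$ lies strictly below $\mathcal{N}_x$ at the same abscissa as soon as $\beta|f(y)|>\frac{\alpha\sqrt3}{36\gamma}$, i.e. $|f(y)|>\frac{\alpha\sqrt3}{36\gamma\beta}$. Thus the whole argument reduces to a one-variable estimate on $|f(y)|$ over the admissible range of $y$.

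The key observation is then that on $(1,\infty)$ the cubic $f$ is decreasing (its critical points are $\tfrac12\pm\tfrac{\sqrt3}{6}$, both $<1$), so $|f|$ is increasing there; since every point of $\Omega_2\cap\mathcal{N}_y$ has $y>\tfrac{1}{2\gamma}$, its value $|f(y)|$ exceeds $|f(\tfrac{1}{2\gamma})|$. A direct computation gives $|f(\tfrac{1}{2\gamma})|=\frac{(1-2\gamma)(1-\gamma)}{8\gamma^3}$, and I would check that the hypothesis \eqref{e:alpha:omega2} is exactly equivalent to $\frac{(1-2\gamma)(1-\gamma)}{8\gamma^3}\ge\frac{\alpha\sqrt3}{36\gamma\beta}$. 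Hence $|f(y)|>\frac{\alpha\sqrt3}{36\gamma\beta}$ for all such $y$, so every point of $\mathcal{N}_y$ in $\Omega_2$ satisfies $y<\Phi(x)\le\nu_x(x)$; the curve $\mathcal{N}_y$ stays strictly below $\mathcal{N}_x$ and the two nullclines cannot meet inside $\Omega_2$.

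I expect the only delicate points to be bookkeeping rather than conceptual: getting the direction of the $f$-bound right (the \emph{upper} bound on $f(x)$ yields a \emph{lower} bound on $\nu_x$), and verifying that the relevant extremum of $|f(y)|$ sits at the left endpoint $y=\tfrac{1}{2\gamma}$ of the range. This monotonicity of $f$ on $(1,\infty)$ is the heart of the argument and is precisely what produces the constant in \eqref{e:alpha:omega2}; the restriction $\gamma<1/2$ is used twice, to guarantee $\tfrac{1}{2\gamma}>1$ (so that $f(y)<0$) and to keep the factors $(1-2\gamma)$ and $(1-\gamma)$ positive.
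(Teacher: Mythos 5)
Your proof is correct. It shares the paper's first step --- the linear minorant $\Phi(x)=\frac{1}{\gamma}\bigl(x-\alpha\frac{\sqrt{3}}{36}\bigr)$ of $\mathcal{N}_x$ on $(1/2,1)$ --- but closes the argument by a different mechanism. The paper compares entry abscissae on the bottom edge $y=\frac{1}{2\gamma}$ of $\Omega_2$: the line $\Phi$ reaches that level at $x_1=\frac{1}{2}+\alpha\frac{\sqrt{3}}{36}$, the nullcline $\mathcal{N}_y$ at $x_2=\frac{1}{2}+\gamma\beta\,\bigl|f\bigl(\tfrac{1}{2\gamma}\bigr)\bigr|$, it checks that \eqref{e:alpha:omega2} is exactly $x_2>x_1$, and then finishes with a geometric curve-tracking remark: $\mathcal{N}_y$ is concave there (equivalently, its slope stays below $1/\gamma$ while $y>1$), so having entered $\Omega_2$ to the right of $\Phi$ it can never catch up with $\mathcal{N}_x$. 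You avoid the curve-tracking entirely: monotonicity of $|f|$ on $(1,\infty)$ converts the hypothesis into the uniform pointwise bound $\beta|f(y)|>\alpha\frac{\sqrt{3}}{36\gamma}$ for every $y>\frac{1}{2\gamma}$, so any hypothetical stationary point $(x^*,y^*)\in\Omega_2$ yields the immediate contradiction $y^*=\frac{x^*}{\gamma}-\beta|f(y^*)|<\Phi(x^*)\le\nu_x(x^*)=y^*$. The two routes produce the identical constant, and in fact the identical inequality, since $x_2-x_1=\gamma\bigl(\beta\bigl|f\bigl(\tfrac{1}{2\gamma}\bigr)\bigr|-\alpha\frac{\sqrt{3}}{36\gamma}\bigr)$; what yours buys is self-containedness --- the paper's terse ``$\mathcal{N}_y$ is concave for $x>x_2$'' still leaves implicit why concavity forbids a later crossing, whereas your argument is a one-line pointwise contradiction needing no statement about the shape or parametrization of $\mathcal{N}_y$. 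Two cosmetic points: \eqref{e:alpha:omega2}, being strict, is equivalent to the \emph{strict} version of your endpoint inequality rather than the non-strict one you wrote (harmless, since $y>\frac{1}{2\gamma}$ is strict inside $\Omega_2$); and the bound $f\le\frac{\sqrt{3}}{36}$ is not global (it fails for large negative arguments) but holds on $(0,\infty)$, hence on the interval $(1/2,1)$ where you actually invoke it.
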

\begin{proof}
    As in~\eqref{e:Ny:estimate} we have that the nullcline $\mathcal{N}_x$ lies above a line for $x\in(1/2,1)$, specifically, 
    \[
        y=\nu_x(x)=\frac{1}{\gamma}\left(x-\alpha f(x) \right) \geq \frac{1}{\gamma}\left(x-\alpha \frac{\sqrt{3}}{36} \right)=:\Phi(x).
    \]
    Clearly we have that $x_1=1/2+\alpha \sqrt{3}/{36}$ satisfies $\Phi(x_1)=1/(2\gamma)$. From the second equation in~\eqref{e:nullclines} we observe that the nullcline $\mathcal{N}_y$ enters $\Omega_2$ (i.e., crosses $y=1/(2\gamma)$) in
    \[
    x_2 = \frac{1}{2}+\frac{1}{2} \beta  \left(1-\frac{1}{2 \gamma }\right) \left(\frac{1}{2 \gamma }-\frac{1}{2}\right).
    \]
    From \eqref{e:alpha:omega2} we have that $x_2>x_1$ and the proof is finished by observing that $\mathcal{N}_y$ is concave for $x>x_2$.    
\end{proof}

Combining Lemmas~\ref{lem:omega1} and \ref{lem:omega2} we get the following statement.
\begin{cor}\label{cor:nonexistence}
Let $\alpha,\beta\in(0,4)$, $\gamma \in (0, 1/2)$ and assume
\begin{equation} \label{e:alpha:omegas}
\displaystyle\max \left\{ \frac{2 \sqrt{3}}{9 \left( 1-\gamma \right) \left(2-\gamma \right)}, \, \frac{\sqrt{3}}{18 \left(1- 2 \gamma \right) \left(1- \gamma \right)} \right\} < \frac{\alpha}{\beta} < \frac{9 \left(1-2\gamma\right) \left( 1- \gamma \right)}{2 \sqrt{3} \gamma^2}.
\end{equation}
Then the system \eqref{e:balanced} has the unique stationary solution $(x^*,y^*)=(0,0)$.
\end{cor}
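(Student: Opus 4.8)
The plan is to assemble the corollary directly from the a priori localization of Lemma~\ref{lem:2r} together with the two non-existence results, Lemmas~\ref{lem:omega1} and~\ref{lem:omega2}. The key observation is that the two-sided inequality~\eqref{e:alpha:omegas} factors cleanly: its left-hand bound on $\alpha/\beta$ is exactly the hypothesis~\eqref{e:alpha:omega1} of Lemma~\ref{lem:omega1}, while its right-hand bound is exactly the hypothesis~\eqref{e:alpha:omega2} of Lemma~\ref{lem:omega2}. Thus~\eqref{e:alpha:omegas} is nothing more than the conjunction of the two earlier hypotheses written as a single chain of inequalities, and the corollary is essentially a bookkeeping step.

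First I would record that $(0,0)$ is always a stationary solution of~\eqref{e:balanced}, since $f(0)=0$; it therefore suffices to exclude every non-trivial equilibrium. Because the corollary assumes $\alpha,\beta\in(0,4)$ and $\gamma\in(0,1/2)$, Lemma~\ref{lem:2r} applies and confines any non-trivial stationary solution to $\Omega_1\cup\Omega_2$. Next I would invoke the left inequality in~\eqref{e:alpha:omegas}, which is precisely~\eqref{e:alpha:omega1}, so that Lemma~\ref{lem:omega1} rules out any equilibrium in $\Omega_1$. Symmetrically, the right inequality in~\eqref{e:alpha:omegas} coincides with~\eqref{e:alpha:omega2}, and Lemma~\ref{lem:omega2} rules out any equilibrium in $\Omega_2$. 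Since $\Omega_1\cup\Omega_2$ is thereby emptied of solutions, no non-trivial equilibrium exists and $(0,0)$ is the unique stationary solution.

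There is no genuine analytic obstacle at this stage: all the real work was carried out in the preceding lemmas, and the only point requiring a moment's care is checking that the outer terms of the chain~\eqref{e:alpha:omegas} match the bounds in~\eqref{e:alpha:omega1} and~\eqref{e:alpha:omega2} verbatim, so that the full region $\Omega_1\cup\Omega_2$ returned by Lemma~\ref{lem:2r} is covered. One might additionally remark that the admissible interval for $\alpha/\beta$ is non-empty for the relevant $\gamma$ (this is where the threshold $\gamma=2/7$ noted after Lemma~\ref{lem:omega1} enters), but such a consistency check is not needed for the logical implication itself and is more naturally deferred to the translation back into the original parameters in Theorem~\ref{t:main}.
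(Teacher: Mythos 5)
Your proposal is correct and matches the paper's own argument exactly: the paper proves the corollary simply by combining Lemmas~\ref{lem:omega1} and~\ref{lem:omega2} (with Lemma~\ref{lem:2r} implicitly providing the confinement to $\Omega_1\cup\Omega_2$), which is precisely your assembly. Your version is if anything slightly more explicit than the paper's one-line justification, since you spell out the role of Lemma~\ref{lem:2r} and the verbatim matching of the two bounds in~\eqref{e:alpha:omegas}.
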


Corollary~\ref{cor:nonexistence} leads directly to our main result, Theorem~\ref{e:main}, analytical and numerical regions are depicted in Fig.~\ref{fig:numerics}.
\paragraph{Proof of Thm.~\ref{t:main}} We apply substitutions~\eqref{e:parameters} to \eqref{e:alpha:omegas} to get \eqref{e:ineq:l1l2}. Local asymptotic stability of the origin follows from linearization. Global asymptotic stability is the consequence of the invariance of all sets $[0,k]^2$, $k\geq k_1$ and nonexistence of limit cycles.


\begin{figure}
\begin{minipage}[c]{0.62\linewidth}
\includegraphics[width=.49\linewidth]{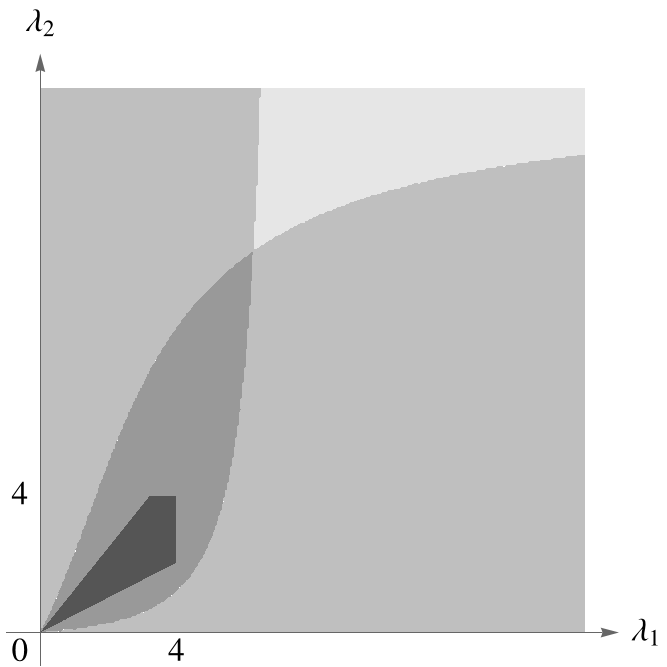}
\includegraphics[width=.49\linewidth]{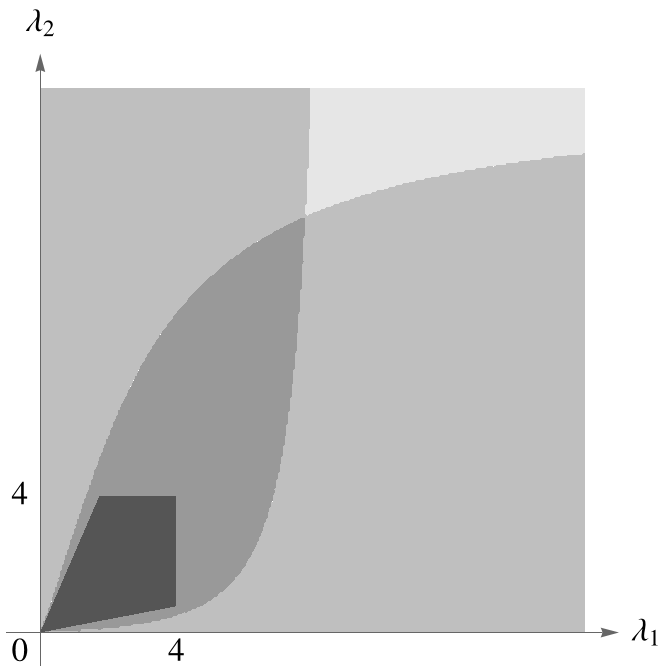}
\caption{Illustration of Theorem~\ref{t:main} with $D=1$, $k_1=1$, and $k_2=2/5$ (left), and $k_2=1/3$ (right). The darkest region correspond to the analytical result for the unique solution from Theorem~\ref{t:main}, the other gray regions (from the darkest to the lightest) correspond to the numerical regions with  unique, three, and five solutions.}\label{fig:numerics}
\end{minipage}
\hfill
\begin{minipage}[c]{0.3\linewidth}
\includegraphics[width=\linewidth]{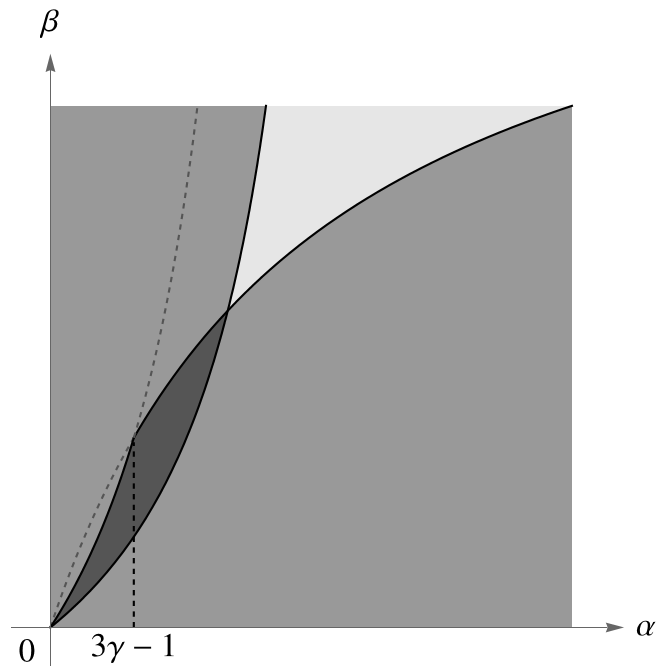}
\caption{Explicit regions for the unique solution, three, and five solutions (dark, medium, light gray, respectively) for the system~\eqref{e:balanced} with $\gamma = 11/25$ and the sawtooth reaction \eqref{e:sawtooth}, see Theorem~\ref{t:sawtooth}.}\label{fig:explicit}
\end{minipage}
\end{figure}

\section{Generalization $a\in(0,1)$}\label{sec:generalization}
Our approach could be replicated for a general viability constant $a\in(0,1)$ and the problem
\begin{equation}\label{e:a}
    \begin{cases}
    \displaystyle x_1' = D(x_2-x_1) + \lambda_1 x_1 \left(1-\frac{x_1}{k_1} \right) \left(\frac{x_1}{k_1}-a \right),\\
    \displaystyle x_2' = D(x_1-x_2) + \lambda_2 x_2 \left(1-\frac{x_2}{k_2} \right) \left(\frac{x_2}{k_2}-a \right),    
    \end{cases}
\end{equation}
For the sake of lucidity introduce bounds
\[
L(a,k_1,k_2)= \frac{ 2 k_1^2 \left((a+1)\left(a- \frac{1}{2} \right)(a-2) + \left(1+ a(a-1) \right)^{3/2}\right)}{27 (k_1 -k_2)}\cdot\max \left\lbrace \frac{1}{a^2(k_1-ak_2)}, \frac{1}{ak_1-k_2},
\right\rbrace.
\]
and
\[
U(a,k_1,k_2)= \frac{2(a k_1 - k_2)(k_1 -k_2)\left((a+1)\left(a-\frac{1}{2} \right)(a-2) - (1-a(1-a))^{3/2})\right)}{ (1-a) k_2^2}.
\]
The generalization of Thm.~\ref{t:main} can be obtained via the same path as in Section~\ref{sec:main}.
\begin{thm}\label{t:a}
Let $a\in(0,1)$ and assume that $\max\{\lambda_1,\lambda_2\}<
\frac{3D}{a^2-a+1}$, $k_2<a\cdot k_1$, and
\begin{equation}\label{e:ineq:l1l2:a}
    L(a,k_1,k_2) < \frac{\lambda_1}{\lambda_2} < U(a,k_1,k_2).
\end{equation}
Then the system \eqref{e:a} has the unique stationary solution $(x_1^*,y_1^*)=(0,0)$.
\end{thm}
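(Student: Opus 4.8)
The plan is to reproduce verbatim the argument of Section~\ref{sec:main}, replacing the symmetric cubic $f(s)=s(1-s)(s-1/2)$ by $f_a(s)=s(1-s)(s-a)$ throughout. First I would nondimensionalize \eqref{e:a} exactly as in \eqref{e:parameters}: setting $x=x_1/k_1$, $y=x_2/k_2$, $\tau=Dt$ turns \eqref{e:a} into the balanced system $x'=\gamma y-x+\alpha f_a(x)$, $y'=\gamma^{-1}x-y+\beta f_a(y)$ with $\gamma=k_2/k_1$, $\alpha=\lambda_1/D$, $\beta=\lambda_2/D$. Under this change the hypotheses become $\max\{\alpha,\beta\}<3/(a^2-a+1)$, $\gamma<a$, and \eqref{e:ineq:l1l2:a} collapses to a clean two-sided bound on $\alpha/\beta$ (the analogue of \eqref{e:alpha:omegas}), so it suffices to prove that under these conditions the only nonnegative stationary solution is the origin.

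Next I would re-run the four-lemma chain. The identity $\gamma y-x=-\alpha f_a(x)=\beta\gamma f_a(y)$ forces $f_a(x)$ and $f_a(y)$ to have opposite signs; since the roots of $f_a$ are now $0,a,1$, the sign bookkeeping of Lemma~\ref{lem:3r} localizes any nontrivial solution into three boxes delimited by the roots and the line $\ell:y=x/\gamma$. The strong-diffusion reduction of Lemma~\ref{lem:2r} then uses the tangent line to $f_a$ at its inflection point $s^\ast=(1+a)/3$, where $f_a'(s^\ast)=(a^2-a+1)/3$; the hypothesis $\alpha,\beta<3/(a^2-a+1)$ is exactly what keeps the corresponding slope factor positive, eliminating one box and shrinking the other two. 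Finally, the analogues of Lemmas~\ref{lem:omega1} and~\ref{lem:omega2} replace the peak estimate $f\le\sqrt3/36$ by $f_a(s)\le M_a$ on $(0,1)$, where $M_a=f_a(s_+)$ is the value at the positive critical point $s_+=\big((1+a)+\sqrt{1-a+a^2}\big)/3$; comparing the endpoints of the concave nullcline arcs against the line obtained from this peak bound produces precisely the thresholds $L(a,k_1,k_2)$ and $U(a,k_1,k_2)$, so that no intersection survives in either remaining box. Combining both exclusions gives nonexistence of nontrivial equilibria, hence uniqueness of the origin; local stability again follows from linearization and, if desired, global attractivity from invariance of the boxes $[0,k]^2$ and the absence of limit cycles, exactly as in the proof of Thm.~\ref{t:main}.

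The main obstacle is the loss of symmetry. When $a=1/2$ the inflection point coincides with the middle root, so $f(s^\ast)=0$ and both the tangent bound $\tfrac14(s-\tfrac12)$ and the peak value $\sqrt3/36$ are unusually clean. For general $a$ the inflection $(1+a)/3$ is no longer a root, so $f_a(s^\ast)\neq0$ and the peak value carries the bulky factor $(1-a+a^2)^{3/2}$ visible in $L$ and $U$. The delicate part is therefore the bookkeeping: one must track where the critical points $s_\pm$ fall relative to each box boundary, confirm that the nullclines retain the concavity and monotonicity used in the endpoint comparisons, and verify that the two candidate lower bounds genuinely compete (the $\max$ in $L$), just as the $\gamma=2/7$ crossover does after Lemma~\ref{lem:omega1}. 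None of these steps is conceptually new, but keeping the asymmetric algebra under control is what turns the clean constants of Section~\ref{sec:main} into the expressions $L$ and $U$.
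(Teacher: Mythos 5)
Your proposal follows exactly the route the paper itself prescribes for Theorem~\ref{t:a} (the paper gives no separate proof, only the remark that the argument of Section~\ref{sec:main} carries over to $f_a(s)=s(1-s)(s-a)$), and you correctly identify where each hypothesis enters: $\gamma=k_2/k_1<a$ for the localization lemma, $\alpha,\beta<3/(a^2-a+1)$ as the slope condition coming from $f_a'\bigl((1+a)/3\bigr)=(a^2-a+1)/3$, and the critical values of $f_a$ at $s_\pm=\bigl((1+a)\pm\sqrt{a^2-a+1}\bigr)/3$ producing the $(a^2-a+1)^{3/2}$ terms in $L$ and $U$. One small repair to your Lemma~\ref{lem:2r} step: when $a<1/2$ the middle root $a$ lies \emph{below} the inflection point $(1+a)/3$, so the tangent-line bound fails on $\bigl(a,(1+a)/3\bigr)$; the clean fix, using the same hypothesis, is that $\alpha f_a'(s)\le\alpha(a^2-a+1)/3<1$ makes $s\mapsto s-\alpha f_a(s)$ strictly increasing, which gives the box reductions directly by comparing with the value at $s=a$.
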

Once we consider asymmetric viability constants $a_1,a_2$ in~\eqref{e:a}, we again arrive to inequalities of the type \eqref{e:ineq:l1l2:a}, in which the bounds become even more complicated and are thus omitted here.



On the contrary, if we consider a sawtooth reaction caricature \cite{McKean}, e.g.,
\begin{equation}\label{e:sawtooth}
f_{\mathrm{CAR}}(s) = \begin{cases}
    -s &\text{if }  s<1/4,\\
    s-1/2 &\text{if }  s \in \left(1/4, 3/4 \right),\\
    1-s &\text{if }  s > 3/4,
\end{cases}
\end{equation}
we can use the same procedures as above to determine explicit necessary and sufficient conditions on parameters $\alpha, \beta, \gamma $ which guarantee that the system \eqref{e:balanced} has the unique stationary solution, see Fig.~\ref{fig:explicit}.

\begin{thm}\label{t:sawtooth}
Let $\gamma \in \left(0,1/2\right).$ Then the system~\eqref{e:balanced} with the sawtooth reaction $f_{\mathrm{CAR}}$~\eqref{e:sawtooth} has the unique stationary solution $(x^*,y^*) = (0,0)$ if and only if
\[
\alpha (\beta +1) + \beta (4\gamma - 3) <0 \quad \text{ and } \quad \begin{cases}
\alpha \beta -3 \alpha + \beta <0 &\text{ if } \alpha \geq 3\gamma -1,\\
\alpha(2-3\gamma) + \beta \gamma (\alpha-1) >0 &\text{ if } \alpha < 3\gamma -1.
\end{cases}
\]
\end{thm}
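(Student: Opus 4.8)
The plan is to turn the problem into elementary plane geometry by exploiting that $f_{\mathrm{CAR}}$ is affine on each of the three intervals $(-\infty,1/4)$, $(1/4,3/4)$, $(3/4,\infty)$. Consequently both nullclines of \eqref{e:balanced},
\[
\mathcal{N}_x:\ y=\frac{x-\alpha f_{\mathrm{CAR}}(x)}{\gamma},\qquad \mathcal{N}_y:\ x=\gamma\bigl(y-\beta f_{\mathrm{CAR}}(y)\bigr),
\]
are continuous piecewise-linear curves whose only breakpoints sit at $x\in\{1/4,3/4\}$ and $y\in\{1/4,3/4\}$. Since $f_{\mathrm{CAR}}$ has exactly the sign pattern of the cubic ($f_{\mathrm{CAR}}<0$ on $(0,1/2)$, $f_{\mathrm{CAR}}>0$ on $(1/2,1)$), the sign bookkeeping in the proof of Lemma~\ref{lem:3r} applies verbatim, so every nontrivial equilibrium lies in $\widehat{\Omega}_1\cup\widehat{\Omega}_2\cup\widehat{\Omega}_3$. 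Uniqueness of the origin is therefore equivalent to the absence of a crossing of $\mathcal{N}_x$ and $\mathcal{N}_y$ in each of these regions, which I would establish region by region.

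The key simplification is that, within any one region, each nullcline consists of at most two linear segments. I would first record the grid lines $x=1/4,3/4$ and $y=1/4,3/4$ that subdivide the region into affine cells, and in every admissible cell solve the resulting $2\times2$ linear system for the unique candidate intersection in closed form. A crossing exists in a region precisely when one such candidate falls inside its cell; writing down the membership inequalities and negating their disjunction gives exact (not merely sufficient) conditions, which is what upgrades the cubic's one-sided estimates to an ``if and only if''. Concretely, I expect the non-crossing condition on $\widehat{\Omega}_2$ to reduce to the first inequality $\alpha(\beta+1)+\beta(4\gamma-3)<0$: its boundary case $\alpha(\beta+1)+\beta(4\gamma-3)=0$ is exactly the statement that $\mathcal{N}_y$ passes through the kink $\bigl(3/4,\tfrac{3-\alpha}{4\gamma}\bigr)$ of $\mathcal{N}_x$, and linearity of both branches propagates the comparison across the whole region.

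The second inequality, with its case distinction, should come from $\widehat{\Omega}_1$. Here the decisive feature is the kink of $\mathcal{N}_x$ at $x=1/4$, located at height $y=\tfrac{1+\alpha}{4\gamma}$; this height equals $3/4$ exactly when $\alpha=3\gamma-1$, which is precisely the threshold appearing in the theorem. For $\alpha\ge 3\gamma-1$ the kink lies in $\{y>3/4\}$ and the relevant pair of segments yields $\alpha\beta-3\alpha+\beta<0$, whereas for $\alpha<3\gamma-1$ the active segments change and produce $\alpha(2-3\gamma)+\beta\gamma(\alpha-1)>0$. I would finish by treating the remaining region $\widehat{\Omega}_3$ and the degenerate cases $\alpha=1$ or $\beta=1$ (where a branch becomes horizontal or vertical), and by confirming that the non-crossing condition there is already implied by the two inequalities, so that no third constraint survives.

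The main obstacle is the bookkeeping of this piecewise-affine geometry: one must determine, as $(\alpha,\beta,\gamma)$ range over all admissible values (the theorem imposes no bound $\alpha,\beta<4$), which segment of $\mathcal{N}_x$ can meet which segment of $\mathcal{N}_y$ inside each region, and then verify that a comparison made only at the kinks and corners genuinely certifies global non-crossing over the entire region. This relies on the monotonicity of each affine branch and is exactly where the case split and the sufficiency — as opposed to mere necessity — of the corner conditions must be argued with care.
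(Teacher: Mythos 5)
Your proposal is correct and follows exactly the route the paper intends: the paper gives no explicit proof of Theorem~\ref{t:sawtooth}, stating only that ``the same procedures as above'' (the region decomposition of Lemma~\ref{lem:3r} and the nullcline comparisons of Lemmas~\ref{lem:omega1}--\ref{lem:omega2}) apply, now made exact because the sawtooth nullclines are piecewise linear. Your key identifications check out — the boundary $\alpha(\beta+1)+\beta(4\gamma-3)=0$ is precisely $\mathcal{N}_y$ passing through the kink $\bigl(3/4,\tfrac{3-\alpha}{4\gamma}\bigr)$ in $\widehat{\Omega}_2$, the threshold $\alpha=3\gamma-1$ is precisely the kink $\bigl(1/4,\tfrac{1+\alpha}{4\gamma}\bigr)$ of $\mathcal{N}_x$ reaching height $3/4$, the two sub-case inequalities are the corresponding segment-versus-kink comparisons in $\widehat{\Omega}_1$, and $\widehat{\Omega}_3$ indeed yields no extra constraint since either inequality forces $\beta<2/\gamma-1$ — so the remaining work is only the (admittedly delicate) bookkeeping you already flag.
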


\section{Discussion}\label{sec:discussion}

\paragraph{Global extinction} Similar results were obtained by numerical and simulation studies by  Amarasekare \cite{Amarasekare1998,Amarasekare1998b}, in which the role of mortality plays an essential role and Althagafi and Petrovskii \cite{Althagafi2021}, in which numerical persistence conditions are presented for a similar model, see Fig.~\ref{fig:numerics}.

\paragraph{Perfect mixing paradox} Our analysis is closely connected to a highly interesting recent critical discussion on perfect mixing paradox. Freedman and Waltman \cite{Freedman1977}  observed that for a model \eqref{e:main} with logistic nonlinearity, the nonzero equilibrium is not $k_1+k_2$ for $D\rightarrow\infty$. Arditi et al. \cite{Arditi2015} proved that the total limiting capacity is given by
\[
     N_T^* = k_1+k_2 +(k_1-k_2) \frac{\lambda_{1} k_2-\lambda_{2} k_1}{\lambda_{1} k_2+\lambda_{2} k_1},
\]
i.e., total capacity is higher if $(k_1-k_2)(\lambda_1-\lambda_2)>0$, lower if $(k_1-k_2)(\lambda_1-\lambda_2)<0$. Ramos-Jiliberto and Moisset de Expanés \cite{Ramos-Jiliberto2017} argued that this phenomenon is linked to unnatural form of diffusion term (present also in our model~\eqref{e:main}). Based on McPeek and Holt \cite{McPeek1992}, they argued that the proper diffusion between unbalanced patches should be described by the term $D(x_2/k_2-x_1/k_1)$ instead of $D(x_2-x_1)$ in the first equation in \eqref{e:main}, a fact corraborated by empirical evidence from \cite{Diffendorfer1998}. Arditi et al. \cite{Arditi2018} responded by a study in which the perfect mixing paradox is present also in a model with balanced dispersal. Naturally, in the spirit of this discussion the diffusion terms in \eqref{e:main}, although prevalent in the literature, could be seen as the main cause of the extinction result Thm.~\ref{e:main}.

\paragraph{Fragmentation} Study of spatially distributed metapopulations \cite{Hanski1998, Hanski2002} is motivated by both natural and anthro\-phogenic fragmentation of habitats \cite{Ewers2005}. The role of Allee effect has been extensively studied \cite{Courchamp2008}. Numerous experiments have been performed to see how the models compare to the different connectivities and number of patches in reality \cite{Damschen2019}. One of interesting question is the discussion whether a single large habitat or several small ones lead to higher diversity, the famous SLOSS debate \cite{Ovaskainen2002}.

\section*{Acknowledgments}
\noindent This work was supported by the Czech Science Foundation under Grant~GA22-18261S. The authors also kindly thank Vladim\'{\i}r \v{S}v\'{\i}gler and Jon\'{a}\v{s} Volek for their helpful remarks.

\end{document}